\documentclass{jloganal}    

\title{Finitely additive measures on $\mathbb Z$ and additive combinatorics}

\author{Zeinab Ashtab}
\givenname{Zeinab}
\surname{Ashtab}
\address{Escuela Superior de F\'{\i}sica y Matem\'aticas\\ Instituto Polit\'ecnico Nacional\\ Av. Instituto Polit\'ecnico Nacional s/n Edificio 9 \\ Col. San Pedro Zacatenco\\ Alcald\'ia Gustavo A. Madero\\ 07738\\ CDMX\\ M\'exico.}
\email{zashtab@ipn.mx}
\author{David Fernández-Bretón}
\givenname{David J.}
\surname{Fernández-Bretón}
\email{dfernandezb@ipn.mx}
\urladdr{https://dfernandezb.web.app/english.html}
\keyword{Finitely additive measure, idempotent ultrafilter, translation-invariant measure, IP-set, Hindman's theorem, almost translation-invariant measure}
\subject{primary}{msc2020}{43A05}
\subject{secondary}{msc2020}{03E75, 54D80, 05D10, 43A07}

\arxivreference{}
\arxivpassword{}

\volumenumber{}
\issuenumber{}
\publicationyear{}
\papernumber{}
\startpage{}
\endpage{}
\doi{}
\MR{}
\Zbl{}
\received{}
\revised{}
\accepted{}
\published{}
\publishedonline{}
\proposed{}
\seconded{}
\corresponding{}
\editor{}
\version{}

\newtheorem{thm}{Theorem}[section]    % Standard theorem environment
\newtheorem{lem}[thm]{Lemma}          % Lemma environment with numbering 
\newtheorem{prop}[thm]{Proposition}    % Proposition environment with numbering 
\newtheorem{cor}[thm]{Corollary}          % Lemma environment with numbering 
\newtheorem{question}[thm]{Question} 
\theoremstyle{definition}
\newtheorem{defn}[thm]{Definition}    % Definition environment with 
\newtheorem{ex}[thm]{Example} 
\newtheorem*{rem}{Remark}             % Unnumbered environment for remarks.
\DeclareMathOperator{\fs}{FS}

\newcommand{\ba}{\mathop{\mathrm{ba}}}
\newcommand{\ip}{\mathnormal{\mathsf{IP}}}
\newcommand{\sip}{\mathnormal{\mathsf{sIP}}}

\begin{document}

\begin{abstract}    % type your abstract below
   We study (bounded) finitely additive measures on the group of integers $\mathbb Z$, as elements of the Banach algebra $\ba(\mathbb Z)$, viewed as a natural generalization of ultrafilters. The algebraic structure of $\ba(\mathbb Z)$ extends the semigroup structure of the \v{C}ech--Stone compactification, allowing methods from ultrafilter theory to be applied in a broader measure-theoretic setting. We investigate idempotent finitely additive measures and establish additive properties of subsets of $\mathbb Z$ having positive measure. We then proceed to study almost translation-invariant and translation-invariant finitely additive measures, showing that these stronger notions yield correspondingly stronger additive conclusions. In particular, we prove that every subset of $\mathbb Z$ whose measure exceeds a certain explicit threshold necessarily is an $\ip_{n}$-set; with stronger properties and lower thresholds depending on the properties of the relevant measures. Several examples illustrating the sharpness and limitations of the results are also presented, together with a discussion of open problems and directions for future research.
\end{abstract}

\maketitle

%%%%%%%%%%%%%%%%%%%%   Start of main body of article

\section{Introduction}

This paper deals with bounded finitely additive measures over the group of integers $\mathbb Z$, especially insofar as these measures constitute a generalization of the concept of an ultrafilter. The theory of bounded finitely additive measures is classical and is developed, for example, in~\cite{Dunford-Schwartz}. The set of all (signed) bounded finitely additive measures over $\mathbb Z$, denoted $\ba(\mathbb Z)$, has a structure of Banach algebra making it isomorphic to $\ell^\infty(\mathbb Z)^*$, and contains within its unit ball a copy of the \v{C}ech--Stone compactification $\beta\mathbb Z$. The algebraic structure of $\beta\mathbb Z$, including its interpretation as the space of ultrafilters endowed with the Stone topology and its natural semigroup operation, is treated in detail in~\cite{Hindman-Strauss} and in~\cite{comfort-negrepontis}. The interpretation of ultrafilters as a particular case of finitely additive measures is made precise by identifying ultrafilters with $\{0,1\}$-valued measures.\\
%**********************************
Under this optics, it turns out that the usual ultrafilter addition operation, extending the usual addition in $\mathbb Z$, and which is one of the fundamental tools in the algebraic approach to Ramsey theory, corresponds exactly to the Banach algebra (Arens) product in $\ba(\mathbb Z)$. Therefore the structure $\ba(\mathbb Z)$ can be conceived as a generalization of $\beta\mathbb Z$. In this paper we strive to correspondingly generalize the use of the additive tools of $\beta\mathbb Z$, useful in proving Hindman's theorem~\cite{hindman-thm} and many of its extensions. Hindman's theorem and the associated theory of finite-sums sets and IP-sets have become central tools in Ramsey theory and combinatorial number theory through the work of~\cite{hindman-ultrafiltersch},~\cite{Furstenberg2}, and \cite{Hindman-Strauss}.\\
We begin by studying idempotent finitely additive measures, and attempt to extract consequences of these kinds of measures in terms of finite sum-sets contained within subsets of $\mathbb Z$ with certain measures, by analogy with the ultrafilter case, where idempotent ultrafilters are central in extracting these kinds of conclusions.% On the other hand, we also study translation-invariant finitely additive measures (or invariant means), which have been extensively studied in the theory of amenable groups and harmonic analysis and, in particular, on the natural numbers in \cite{Gardner}.\\

Afterwards, we attempt to obtain variations of the concept of idempotent measures that could allow us to guarantee additive properties of sets based on their measure. We begin by studying {\it almost translation-invariant measures}, followed by {\it translation-invariant measures}. 
Translation-invariant finitely additive measures have been extensively studied in the theory of invariant means and amenable groups~\cite{F.Greenleaf}, particularly in the setting of the natural numbers~\cite{Gardner}.  Almost translation-invariant measures are, in the ultrafilter context, precisely equivalent to idempotence~\cite{Hindman-Strauss}; in this more generalized context they provide us with slightly better upper bounds than idempotent measures do. Translation-invariant measures, on the other hand, have stronger properties. We correspondingly introduce a stronger definition on the additive side, singling out sets of integers possessing sets of finite sums in a very strong way.

On Section \ref{Preliminaries} we lay out the basic definitions and results that we will take as a starting point. Then, on Section \ref{IdempotentandIPset}, we study idempotent finitely additive measures, proving additive results but also showing several examples. Section \ref{Translationinvariantmeasures} focuses on translation-invariant and almost translation-invariant measures, and in this section we prove the corresponding combinatorial theorems, analogous to the ones from the previous section, but with better bounds. Finally, there is a short final Section with some discussion and open questions.

\section{Preliminaries}
\label{Preliminaries}

In this section we state the definitions that will be used throughout the paper, as well as certain facts about them. All of these are standard, and can be found in classical sources such as~\cite{comfort-negrepontis,Hindman-Strauss,dales-automatic,Dunford-Schwartz}. We begin by stating what pertains to the theory of ultrafilters.

%***************************************************************
\begin{defn}
Let $X$ be an infinite set.
\begin{enumerate}
    \item An {\bf ultrafilter} on $X$ is a collection $u\subseteq P(X)$ satisfying the following axioms: 
    \begin{enumerate}
    \item 
    $\varnothing\in u$,
    \item
    If $A,B\in u$, then $A\cap B\in u$,
    \item 
    If $A\in u $ and $A\subseteq B\subseteq X$, then $B\in u$,
    \item 
    For every $A\subseteq X$
    \[
    A\in u \quad\text{or}\quad X\setminus A\in u,
    \]
    but not both.
    \end{enumerate}
    \item We denote the set of all ultrafilters on $X$ by $\beta X$.
    \item For each subset $A\subseteq X$ we define 
 \[
 \overline{A}=\left\{p\in \beta X\big| A\in p\right\}.
 \]   
 \item The {\bf Stone topology} on $\beta X$ is the topology generated by the family $\left\{\overline{A}\big| A\subseteq X\right\}$; this family forms a basis of clopen sets.
\end{enumerate}
\end{defn}

It is standard to consider $X$ to be a subset of $\beta X$ by means of identifying every $x\in X$ with the corresponding {\it principal ultrafilter} $\left\{A\subseteq X\big|x\in A\right\}$. In this line of thought, the topological space $\beta X$ (considered with the Stone topology) is a compact Hausdorff space containing $X$ (considered as a discrete space) as a dense open subset. Under these conditions, $\beta X$ is known as the {\it \v{C}ech--Stone compactification of $X$}. In the case where $X$ has a group structure, such as if $X=\mathbb Z$ equipped with addition, this operation can be extended to all of $\beta\mathbb Z$.

\begin{defn}
    The binary operation $+$ on $\beta\mathbb Z$ is given by: for $p,q\in \beta\mathbb Z$, the element $p+q\in\beta\mathbb Z$ is defined by 
 \[
 A\in p+q \quad\Leftrightarrow\quad \left\{n\in\mathbb Z\big|A-n\in q\right\}\in p.
 \]
 Equivalently,
 \[
 A\in p+q \quad\Leftrightarrow\quad \left\{n\in \mathbb Z\big|\left\{m\in \mathbb Z\big| n+m\in A\right\}\in q\right\}\in p.
 \]
\end{defn}

It is known that $+$ is a semigroup operation on $\beta\mathbb Z$, extending the addition on $\mathbb Z$. Note that $+$ is not commutative in $\beta\mathbb Z$. Under these conditions, $\beta\mathbb Z$ is known as a {\it right-topological semigroup}, a semigroup with the property that, for every fixed $v$, the mapping $u\longmapsto u+v$ is continuous.

Having defined all of the concepts related to ultrafilters, we proceed to state some definitions that can be seen as a generalization. 
\begin{defn}
    Let $X$ be a nonempty set.
    \begin{enumerate}
        \item A function $\mu\colon \wp(X)\longrightarrow [-\infty,\infty]$ is called a (signed) {\bf finitely additive measure} if it satisfies the following:
\begin{enumerate}
    \item 
    $\mu(\varnothing)=0$,
    \item 
    Whenever $A,B\subseteq X$ are disjoint, 
    \[
    \mu(A\cap B)=\mu(A) + \mu(B).
    \]
\end{enumerate}
    \item We will say that a finitely additive measure $\mu$ on $X$ is {\bf positive} if $\mu(A)\geq 0$ for all $A\subseteq X$.
    \item We use the notation $\ba(X)$ to denote the set of all bounded signed finitely additive measures on $X$.
    \end{enumerate}
\end{defn}

Note that a signed finitely additive measure $\mu$ admits a {\it Jordan decomposition} as $\mu=\mu^+-\mu^-$, where $\mu^+,\mu^-$ are both positive measures. However, $\mu$ need not necessarily admit a Riesz decomposition.

The set $\ba(X)$ can be thought of as the dual of the Banach space $\ell^\infty(X)$ by identifying each $\mu\in\ba(X)$ with the linear functional $:\ell^\infty(X)\longrightarrow\mathbb R$ given by integrating with respect to $\mu$. This perspective gives $\ba(X)$ the structure of a Banach space, where the norm for a positive finitely additive measure $\mu$ is given by $\|\mu\|=\mu(X)$. Additionally, this allows us to equip $\ba(X)$ with the weak$^{*}$-topology, the coarsest topology making all {\it evaluation maps} $\mu\longmapsto\int\ f\ d\mu$ continuous, for all $f\in\ell^\infty(X)$.

\begin{defn}
    For an infinite set $X$, we will say that a finitely additive measure $\mu$ on $X$ is an {\bf ultrafilter measure} if it is a positive finitely additive measure such that, for all $A\subseteq X$, we have $\mu(A)\in\{0,1\}$.
\end{defn}

We formally record the following easy-to-prove result, while at the same time we introduce some notation that will be used throughout the rest of the paper.

\begin{thm}
    Let $X$ be an infinite set. Given an ultrafilter $u$ on $X$, we define its {\bf ultrafilter measure} $\mu_u$ by:
\[
\mu_{u}(A)=\begin{cases}
			1, & \text{if $A\in u$ }\\
            0, & \text{if A$\notin$ u}.
		 \end{cases}
\]
Then $\mu_u(A)$ is a finitely additive ultrafilter measure. Conversely, given a finitely additive ultrafilter measure $\mu$, we may define $u_\mu=\{A\subseteq X\big|\mu(A)=1\}$, and $u_\mu$ is an ultrafilter on $X$. Furthermore, $\mu_{u_\mu}=\mu$ for all ultrafilter measures, and $u_{\mu_u}=u$ for all ultrafilters $u\in\beta X$.
\end{thm}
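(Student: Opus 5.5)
We verify the four assertions in turn, directly from the definitions. Throughout we read the axioms with their evident intended meaning: an ultrafilter satisfies $\varnothing\notin u$, and finite additivity reads $\mu(A\cup B)=\mu(A)+\mu(B)$ for disjoint $A,B$.

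First, we show that $\mu_u$ is a finitely additive ultrafilter measure. Applying axiom (d) to $\varnothing$ shows $X\in u$, so $\mu_u(X)=1$. Also $\mu_u(\varnothing)=0$ because $\varnothing\notin u$. Positivity and $\{0,1\}$-valuedness are built into the definition. For additivity, let $A,B$ be disjoint.
\begin{itemize}
\item $A$ and $B$ cannot both lie in $u$, since otherwise $A\cap B=\varnothing\in u$ by (b).
\item By (c), if either of them lies in $u$ then so does $A\cup B$.
\item If neither lies in $u$, then by (d) both complements lie in $u$. By (b) so does $(X\setminus A)\cap(X\setminus B)=X\setminus(A\cup B)$, and hence $A\cup B\notin u$.
\end{itemize}
So $A\cup B\in u$ exactly when exactly one of $A,B$ is in $u$. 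This gives $\mu_u(A\cup B)=\mu_u(A)+\mu_u(B)$ in all cases.

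For the converse, the one real subtlety is that the identically zero measure is technically $\{0,1\}$-valued and positive, yet $u_0=\varnothing$ is not an ultrafilter. So we must (and will) require $\mu(X)=1$, i.e.\ $\mu\neq 0$; this is implicit in the statement. Note that positivity plus additivity gives monotonicity: if $A\subseteq B$ then $\mu(B)=\mu(A)+\mu(B\setminus A)\ge\mu(A)$. Combined with finite additivity, this also gives subadditivity $\mu(A\cup B)\le\mu(A)+\mu(B)$. Since $\mu\neq0$, some set has measure $1$, so $\mu(X)\ge 1$ and hence $\mu(X)=1$.

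We then check the axioms for $u_\mu$.
\begin{itemize}
\item $\varnothing\notin u_\mu$ since $\mu(\varnothing)=0$.
\item Upward closure follows from monotonicity.
\item From $\mu(A)+\mu(X\setminus A)=1$ with both values in $\{0,1\}$, exactly one of $A$ and $X\setminus A$ lies in $u_\mu$.
\item For intersections, if $A,B\in u_\mu$ then $\mu(X\setminus A)=\mu(X\setminus B)=0$. Subadditivity gives $\mu(X\setminus(A\cap B))\le 0$, hence $\mu(A\cap B)=1$.
\end{itemize}

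Finally, the two identities are immediate unwindings of the definitions. We have $\mu_{u_\mu}(A)=1$ iff $A\in u_\mu$ iff $\mu(A)=1$, and both measures are $\{0,1\}$-valued, so they agree. Likewise $A\in u_{\mu_u}$ iff $\mu_u(A)=1$ iff $A\in u$. The main obstacle is not any computation but the degenerate case $\mu=0$ (together with the typos in the stated axioms). We handle it by making explicit the normalization $\mu(X)=1$ in the notion of ultrafilter measure.
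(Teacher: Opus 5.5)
Your proof is correct and complete. The paper gives no proof of its own (it records the result as ``easy-to-prove''), so your direct unwinding of the definitions is exactly the routine verification it has in mind.

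Your treatment of the zero measure is a genuine correction to the statement, not pedantry. Under the paper's definition, an ultrafilter measure is only required to be positive and $\{0,1\}$-valued, so $\mu=0$ qualifies. But $u_0=\varnothing$ is not an ultrafilter, so the converse direction and the claimed bijection really do need the normalization $\mu(X)=1$. Your readings $\varnothing\notin u$ and $\mu(A\cup B)=\mu(A)+\mu(B)$ of the two mistyped axioms are likewise the intended ones.
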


Therefore, the family of all ultrafilters and the family of all ultrafilter measures are in a natural bijection. Ultrafilter measures sit within the unit ball of $\ba(X)$ and, furthermore, the aforementioned identification of ultrafilters and ultrafilter measures allows us to think of $\beta X$ as a topological subspace of $\ba(X)$, when we see the latter as equipped with the weak$^*$-topology.

Now, when we perform all of the previous constructions on a set that comes equipped with a group operation, such as if $X=\mathbb Z$ (the additive group of integers), we can identify each $n\in\mathbb Z$ with the basic sequence $e_n\in\ell^1(\mathbb Z)$ (the sequence whose $n$-th value is $1$, and all other values are $0$) and consider the usual convolution operation in $\ell^1(\mathbb Z)$ as a semigroup operation that extends the addition in $\mathbb Z$. This makes $\ell^1(\mathbb Z)$ into a Banach algebra, and induces a Banach algebra structure into its dual space $\ell^\infty(\mathbb Z)$. This viewpoint allows us to define the {\it left Arens product} on $\ba(\mathbb Z)$ by the equation:
\begin{equation*}
    (\mu*\nu)(A)=\int\int\chi_A(n+m)\ d\nu(m)\ d\mu(n);
\end{equation*}
this product makes $\ba(\mathbb Z)$ into a Banach algebra, and coincides with the ultrafilter sum in the case of ultrafilter measures.

%*************************************************************************

We now proceed to state the definitions from additive combinatorics that we will use.

\begin{defn}
    Let $(m_{k})_{k=1}^{\alpha}$, where $\alpha\in\mathbb N\cup\{\infty\}$, be a (finite or infinite) sequence of natural numbers. Define the set of all finite sums of distinct elements of this sequence by
    \[
    \fs(m_{k})=\{m_{k_{1}}+m_{k_{2}}+\dots+m_{k_{m}}| 1\leq k_{1}<\dots<k_{m}, 1\leq m\leq\alpha\}.
    \]
\end{defn}

\begin{defn}
    Let $X\subseteq\mathbb Z$.
    \begin{enumerate}
        \item For an $n\in\mathbb N$, we say that $X$ is an {\bf $\ip_n$-set} if there exists a sequence of length $n$, $(m_k)_{k=1}^n$, such that $\fs(m_k)\subseteq X$,
        \item we say that $X$ is an {\bf $\ip$-set} if there exists an infinite sequence, $(m_k)_{k=1}^\infty$, such that $\fs(m_k)\subseteq X$.
    \end{enumerate}    
\end{defn}

So the Ramsey-theoretic result known as the Folkman--Rado--Sanders's theorem establishes that, for every $n\in\mathbb N$, there exists a sufficiently large $M$ such that whenever $\{1,2,\ldots,M\}=X_0\cup X_1$, then necessarily one of the $X_i$ is an $\ip_n$-set. The more general Hindman's theorem asserts that the family of $\ip$-sets forms a coideal (that is, if $X$ is an $\ip$-set and $X=X_0\cup X_1$, then one of the $X_i$ is an $\ip$-set). The well-known ultrafilter proof of Hindman's theorem proceeds by establishing that there are nonprincipal idempotent ultrafilters (with respect to the ultrafilter addition, that is, there are nonprincipal ultrafilters $u$ such that $u+u=u$), and that every element of a nonprincipal idempotent ultrafilter is an $\ip$-set~\cite[Theorems 5.12 and 16.4]{Hindman-Strauss}. Therefore, translating this to the language of finitely additive measures, we obtain the following.

\begin{cor}\label{prop:idempotentip}
    If $\mu\in\ba(\mathbb Z)$ is an idempotent ultrafilter measure, then every $A\subseteq\mathbb Z$ such that $\mu(A)>0$ is an $\ip$-set.
\end{cor}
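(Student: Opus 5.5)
The plan is to reduce the statement to the ultrafilter results quoted just before it, using the correspondence between ultrafilters and ultrafilter measures. Let $\mu\in\ba(\mathbb Z)$ be an idempotent ultrafilter measure, so $\mu*\mu=\mu$. By the theorem identifying ultrafilters with ultrafilter measures, $\mu=\mu_u$ for the ultrafilter $u=u_\mu$. For every $A\subseteq\mathbb Z$, $\mu(A)>0$ is equivalent to $\mu(A)=1$, that is, to $A\in u$. So the statement amounts to: every member of $u$ is an $\ip$-set.

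\textbf{Step 1: $u$ is idempotent.} I would compute $(\mu_u*\mu_u)(A)$ from the definition of the left Arens product. The inner integral $\int\chi_A(n+m)\,d\mu_u(m)$ equals $\mu_u(A-n)$. This is $1$ exactly when $A-n\in u$, and $0$ otherwise. The outer integral of this $\{0,1\}$-valued function against $\mu_u$ is therefore $\mu_u(\{n : A-n\in u\})$. Hence $\mu_u*\mu_u=\mu_{u+u}$, which is the fact recorded in the preliminaries that the Arens product agrees with ultrafilter addition. Injectivity of $u\mapsto\mu_u$ then turns $\mu*\mu=\mu$ into $u+u=u$.

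\textbf{Step 2: case analysis.}

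\emph{Principal case.} If $u$ is principal, generated by some $n$, then $u+u$ is principal generated by $2n$, so idempotence forces $n=0$. Every $A\in u$ then contains $0$. The constant sequence $m_k=0$ has $\fs(m_k)=\{0\}\subseteq A$. This witnesses the $\ip$ property, provided $0$ is admitted as a term of the sequence. I would state this convention explicitly, or else note that the intended content concerns nonprincipal $\mu$.

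\emph{Nonprincipal case.} Here I invoke \cite[Theorem 5.12]{Hindman-Strauss}, valid in any semigroup and hence in $(\mathbb Z,+)$: every member of an idempotent ultrafilter contains $\fs(m_k)$ for some infinite sequence $(m_k)$. The proof builds the sequence inductively using $A^\star=\{x\in A : A-x\in u\}\in u$, choosing $m_{k+1}$ in a shrinking set from $u$.

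\textbf{Main obstacle.} This is reconciling the output with the definition of $\ip$-set, which speaks of natural numbers, while $u$ lives on $\mathbb Z$. For a nonprincipal $u$, exactly one of $\mathbb N$ and $-\mathbb N$ belongs to $u$. If $\mathbb N\in u$, I would apply the construction to $A\cap\mathbb N\in u$, which produces $m_k\in\mathbb N$. If $-\mathbb N\in u$, the same argument yields negative $m_k$, so I would read the definition with $m_k\in\mathbb Z$, which is the natural reading in $\mathbb Z$. Beyond this, the argument is a direct translation, and the only genuine computation is the product identity in Step 1.
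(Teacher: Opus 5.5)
Your proposal is correct and follows the paper's route: the paper obtains the corollary by transporting \cite[Theorem 5.12]{Hindman-Strauss} through the identification $u\leftrightarrow\mu_u$ together with the fact that the Arens product restricts to ultrafilter addition, and your Step~1 is exactly the computation the paper leaves implicit. Your extra remarks on the principal idempotent at $0$ and on idempotents containing $-\mathbb N$ (such as $-p$ for an idempotent $p$ with $\mathbb N\in p$) are sound, since the corollary as literally stated, without ``nonprincipal'' and with $\fs$ phrased for natural numbers, does require reading the definition of $\ip$-set with $m_k\in\mathbb Z$, as you propose.
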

%***********************************************************

In the previous corollary, ``idempotent'' refers to the Arens product. So, idempotent finitely additive measures are those measures $\mu\in\ba(\mathbb Z)$ such that $\mu*\mu=\mu$. Combinatorially, recovering the definition of $\mu*\mu$, this means that, for every $A\subseteq\mathbb Z$,
\begin{equation*}
    \mu(A)=\int\mu(A-x)\ d\mu(x).
\end{equation*}

We finish this section with a lemma that will be used throughout the paper.

\begin{prop}\label{characterizeipn}
Let $A\subseteq\mathbb Z$, and let $n\in\mathbb N$. Then, $A$ is an $\ip_{n+1}$-set if and only if there exists an $x\in A$ such that $A\cap(A-x)$ is an $\ip_{n}$-set.
\end{prop}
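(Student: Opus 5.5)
Both directions come from splitting off one generator of a finite sums set. We write $\fs(m_1,\ldots,m_n)$ for $\fs((m_k)_{k=1}^n)$. The basic identity is
\[
\fs(m_1,\ldots,m_{n+1})=\{m_{n+1}\}\cup\fs(m_1,\ldots,m_n)\cup\bigl(m_{n+1}+\fs(m_1,\ldots,m_n)\bigr),
\]
which holds because a nonempty subset of $\{1,\ldots,n+1\}$ either is $\{n+1\}$, avoids $n+1$, or is $\{n+1\}$ together with a nonempty subset of $\{1,\ldots,n\}$. With this in hand, the proof only requires noting that $y\in A-x$ if and only if $x+y\in A$.

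($\Rightarrow$) Suppose $\fs(m_1,\ldots,m_{n+1})\subseteq A$, and put $x=m_{n+1}$, which lies in $A$. Take any $s\in\fs(m_1,\ldots,m_n)$. Then $s\in A$, because $s$ is itself a finite sum of the longer sequence. Also $s+x$ is a finite sum of distinct terms of $(m_k)_{k=1}^{n+1}$, since $x$ is a term not used in $s$. Hence $s+x\in A$, that is, $s\in A-x$. Therefore $\fs(m_1,\ldots,m_n)\subseteq A\cap(A-x)$, so $A\cap(A-x)$ is an $\ip_n$-set.

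($\Leftarrow$) Suppose $x\in A$ and $\fs(m_1,\ldots,m_n)\subseteq A\cap(A-x)$. Define $m_{n+1}=x$. By the identity above, every element of $\fs(m_1,\ldots,m_{n+1})$ falls into one of three cases:
\begin{itemize}
\item it equals $x$, which lies in $A$ by hypothesis;
\item it lies in $\fs(m_1,\ldots,m_n)\subseteq A$;
\item it has the form $x+s$ with $s\in\fs(m_1,\ldots,m_n)\subseteq A-x$, so $x+s\in A$.
\end{itemize}
Thus $\fs(m_1,\ldots,m_{n+1})\subseteq A$, and $A$ is an $\ip_{n+1}$-set.

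The only potential obstacle is the ordering convention in the definition of $\fs$. Since $\fs$ depends only on the set of index subsets, the order is irrelevant, and appending $x$ as the last term is harmless. The definition also speaks of natural numbers while $A\subseteq\mathbb Z$. If the $m_k$ are required to be positive, one must check that $x$ is positive. In ($\Rightarrow$) this is automatic, since $x=m_{n+1}$ is one of the given terms. In ($\Leftarrow$) it is not automatic, so we read the statement with $m_k\in\mathbb Z$, in line with the ambient setting $A\subseteq\mathbb Z$. Equivalently, if positivity is intended, the hypothesis should require $x\in A\cap\mathbb N$.
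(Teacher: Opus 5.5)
Your proof is correct and is essentially the paper's argument: in both directions you split off the last generator $x=m_{n+1}$ and use $y\in A-x \iff x+y\in A$, with the same three-case check for ($\Leftarrow$). Your positivity caveat is a genuine point that the paper passes over silently. If the $m_k$ had to be natural numbers, ($\Leftarrow$) would fail; for example, $A=\{-2,-1,1\}$ with $x=-2$ gives $A\cap(A-x)=\{1\}$, which is $\ip_1$, yet $A$ is not $\ip_2$. So reading $m_k\in\mathbb Z$ is the right fix, and the paper implicitly adopts it when it identifies $\ip_1$-sets with nonempty sets.
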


\begin{proof}\hfill
\begin{description}
    \item[$\Rightarrow$] Let $(x_k)_{k=1}^{n+1}$ be a sequence of length $n+1$ such that $\fs(x_k)\subseteq A$. Then $x_{n+1}\in A$ and, for each $y\in\fs(x_k)_{k=1}^n$, we have that $y+x_{n+1}\in\fs(x_k)_{k=1}^{n+1}\subseteq A$, which means $y\in A-x_{n+1}$. Thus $\fs(x_k)_{k=1}^n\subseteq A-x_{n+1}$; of course it is also the case that $\fs(x_k)_{k=1}^n\subseteq\fs(x_k)_{k=1}^{n+1}\subseteq A$. Therefore $\fs(x_k)_{k=1}^n\subseteq A\cap(A-x_{n+1})$, and so $A\cap(A-x_{n+1})$ is an $\ip_n$-set.
    \item[$\Leftarrow$] Suppose $x\in A$ is such that $A\cap(A-x)$ is an $\ip_n$-set, and let $(x_k)_{k=1}^n$ be a sequence of length $n$ such that $\fs(x_k)\subseteq A\cap(A-x)$. Define $x_{n+1}=x$ and consider the sequence of lenth $n+1$ given by $(x_k)_{k=1}^{n+1}$. Given a $y\in\fs(x_k)_{k=1}^{n+1}$, there are three possible cases:
    \begin{enumerate}
        \item $y\in\fs(x_k)_{k=1}^n$, in which case we have $y\in A\cap(A-x)\subseteq A$;
        \item $y=x_{n+1}=x\in A$,
        \item $y=z+x_{n+1}$, where $z\in\fs(x_k)_{k=1}^n$. In this case we know by hypothesis that $z\in A\cap(A-x)\subseteq A-x$, meaning that $y=z+x_{n+1}=z+x\in A$.
    \end{enumerate}
    In all three cases under consideration we were able to conclude that $y\in A$, therefore $\fs(x_k)_{k=1}^{n+1}\subseteq A$ and $A$ is an $\ip_{n+1}$-set.
\end{description}
\end{proof}
%********************************************************

\section{Idempotent measures and $\ip_n$-sets}
\label{IdempotentandIPset}

There are several different proofs of Hindman's theorem, involving several different ideas: from purely combinatorial~\cite{baumgartner-short-proof-of-hindman}, to using ultrafilters~\cite{Hindman-Strauss,fernandez-monthly}, to using tools such as ergodic theory~\cite{FurstenbergWeiss}. Originally motivated by finding yet another proof utilizing $\ba(\mathbb Z)$, we would like to see what kinds of results, in a spirit similar to that of Corollary~\ref{prop:idempotentip}, can be obtained when considering general idempotent finitely additive measures, that are not necessarily ultrafilter measures. At first sight, we do not seem to be able to obtain results as strong.
%{\color{blue}But we proved this in part 3 in example \ref{example-combination-ultrafilters}!} {\color{red}I think this example is a particular example, but we still have no general theorems (other than~\ref{sequencebounds}) that apply to all idempotent finitely additive measures...}

%*

%{\color{red}The following is from Hindman and Strauss, so it is probably not worth including the proof here. Also, we need to see if we will really use the lemma (otherwise, delete it).
%\begin{lem}
%\label{removingxofA,isIP}
%\cite{Hindman-Strauss}
%Let $A\in \mathbb N$. $A$ is an $IP_{n+1}$-set; if and only if for every $x$ coming from the generating sequence, $A\cap(A-x)$ is an $IP_{n}$-set.
%\end{lem}
%}

%So, in terms of general idempotent measures (that are not necessarily ultrafilter measures), we can at least get the following. 

\begin{thm}\label{sequencebounds}
Recursively define a sequence $(\alpha_n)_{n\in\mathbb N}$ of real numbers by:
\begin{eqnarray*}
    \alpha_1 & = & 0, \\
    \alpha_{n+1} & = & \frac{\alpha_n -1 +\sqrt{\alpha_n^2-2\alpha_n+5}}{2}.
\end{eqnarray*}
Then we have:
\begin{enumerate}
    \item Each of the $\alpha_n$ is a well-defined real number,
    \item the sequence $(\alpha_n)_{n\in\mathbb N}$ is strictly increasing, and for each $n$, $0\leq\alpha_n<1$,
    \item if $\mu$ is an idempotent positive measure in the unit ball of $\ba(\mathbb Z)$, then for every $A\subseteq\mathbb Z$ we have that $\mu(A)>\alpha_n$ implies that $A$ is an $\ip_n$-set.
\end{enumerate}
\end{thm}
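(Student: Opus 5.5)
The plan is to prove (1) and (2) by elementary analysis of the recursion, and (3) by induction on $n$ using Proposition~\ref{characterizeipn} together with the idempotence identity $\mu(A)=\int\mu(A-x)\,d\mu(x)$.

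For (1) and (2), I first note that $\alpha_n^2-2\alpha_n+5=(\alpha_n-1)^2+4>0$, so the square root is always defined. Moreover, $\alpha_{n+1}$ is exactly the larger (indeed the unique positive) root of
\[
g_n(t)=t^2+(1-\alpha_n)t-1 .
\]
Assume inductively that $0\le\alpha_n<1$. Then
\[
g_n(\alpha_n)=\alpha_n-1<0 \quad\text{and}\quad g_n(1)=1-\alpha_n>0 .
\]
Since $g_n$ is an upward parabola with $g_n(0)=-1<0$, its positive root lies strictly between $\alpha_n$ and $1$. This gives $\alpha_n<\alpha_{n+1}<1$, which settles (2) by induction starting from $\alpha_1=0$. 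I also record the useful reformulation: for $a\ge 0$, we have $g_n(a)\le 0$ if and only if $a\le\alpha_{n+1}$.

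For (3), I argue by induction on $n$.

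\emph{Base case $n=1$.} If $\mu(A)>0$ then $A\neq\varnothing$, and any $x\in A$ yields $\fs(x)\subseteq A$, so $A$ is an $\ip_1$-set.

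\emph{Inductive step.} Suppose the claim holds for $n$, and let $a=\mu(A)>\alpha_{n+1}$. By Proposition~\ref{characterizeipn} and the inductive hypothesis, it suffices to find $x\in A$ with $\mu(A\cap(A-x))>\alpha_n$. Suppose toward a contradiction that $\mu(A\cap(A-x))\le\alpha_n$ for every $x\in A$. I split the integral in the idempotence identity over $A$ and $\mathbb Z\setminus A$; the integrand $x\mapsto\mu(A-x)$ is bounded, so it is integrable against $\mu$.

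On $A$, finite additivity together with positivity and $\mu(\mathbb Z)\le 1$ give
\[
\mu(A-x)=\mu(A\cap(A-x))+\mu((A-x)\setminus A)\le \alpha_n+\mu(\mathbb Z\setminus A)\le \alpha_n+1-a .
\]
On $\mathbb Z\setminus A$, I simply bound $\mu(A-x)\le 1$. Integrating these bounds against $\mu$ yields
\[
a\le a(\alpha_n+1-a)+(1-a)\cdot 1 ,
\]
which rearranges to $a^2+(1-\alpha_n)a-1\le 0$, that is, $g_n(a)\le 0$. By the reformulation above this means $a\le\alpha_{n+1}$, a contradiction.

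The only delicate point is making sure that every use of $\mu(\mathbb Z)\le 1$ and of positivity is legitimate. These are the places where the hypotheses ``positive'' and ``unit ball'' enter: they give $\mu(\mathbb Z\setminus A)\le 1-a$ and allow integrating inequalities of bounded functions, which is monotone for positive finitely additive measures. Once the splitting of the integral is set up, the quadratic inequality matches the recursion exactly, so I expect this to be the main, but modest, obstacle.
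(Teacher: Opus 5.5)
Your proposal is correct and is essentially the paper's proof: the same induction through Proposition~\ref{characterizeipn}, the same splitting of $\int\mu(A-x)\,d\mu(x)$ over $A$ and $\mathbb Z\setminus A$ with the bound $\mu(A-x)\le\alpha_n+1-\mu(A)$ on $A$, and the same quadratic $X^2+(1-\alpha_n)X-1$. The differences are cosmetic: you argue by contradiction, which in effect shows the paper's set $A^\star$ is nonempty rather than bounding $\mu(A^\star)$ from below, and you prove part (2) by checking the sign of $g_n$ at $\alpha_n$ and at $1$ instead of the paper's direct square-root inequalities.
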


\begin{proof}\hfill
\begin{enumerate}
    \item The polynomial $X^2-2X+5$ has a negative discriminant and hence it has no real roots. Therefore, this polynomial always takes positive values. In particular $\alpha_n^2-2\alpha_n+5>0$.
    \item Begin by noting that $(1-\alpha_n)^2<\alpha_n^2-2\alpha_n+5$; from here it follows that $1-\alpha_n<\sqrt{\alpha_n^2-2\alpha_n+5}$ and so the numerator from the inductive definition of $\alpha_{n+1}$ is positive; hence $\alpha_{n+1}>0$. We now prove $\alpha_n<\alpha_{n+1}<1$ by induction on $n$, there being nothing to do for $n=1$. So we assume by induction hypothesis that $\alpha_n<1$, which in particular implies that $4\alpha_n<4$. From here, on the one hand it follows that $(\alpha_n+1)^2<\alpha_n^2-2\alpha_n+5$, hence $\alpha_n+1<\sqrt{\alpha_n^2-2\alpha_n+5}$ and so $2\alpha_n<\alpha_n-1+\sqrt{\alpha_n^2-2\alpha_n+5}$, which immediately implies $\alpha_n<\alpha_{n+1}$. On the other hand, from the same inequality one can deduce $\alpha_n^2-2\alpha_n+5<\alpha_n^2-6\alpha_n+9=(\alpha_n-3)^2$, so that from there it follows that $\sqrt{\alpha_n^2-2\alpha_n+5}<3-\alpha_n$ and hence $\alpha_n-1+\sqrt{\alpha_n^2-2\alpha_n+5}<2$, so that $\alpha_{n+1}<1$.
    \item The proof goes by induction on $n\in\mathbb N$, the case $n=1$ being straightforward ($\mu(A)>0$ implies $A\neq\varnothing$, which is the same as being an $\ip_1$-set). Now suppose the theorem is established for certain $n$, assume that $\mu(A)>\alpha_{n+1}$, and define
    \begin{equation*}
        A^\star=\{x\in A\big|\mu(A-x)>\alpha_n+1-\mu(A)\},
    \end{equation*}
    We will proceed to estimate the measure of $A^\star$. Note that
    \begin{eqnarray*}
        \mu(A) & = & (\mu*\mu)(A)=\int_{\mathbb Z}\mu(A-x)\ d\mu(x) \\
        & = & \int_{\mathbb Z\setminus A}\mu(A-x)\ d\mu(x)+\int_{A\setminus A^\star}\mu(A-x)\ d\mu(x)+\int_{A^\star}\mu(A-x)\ d\mu(x) \\
        & \leq & (1-\mu(A))+(\mu(A)-\mu(A^\star))(\alpha_n+1-\mu(A))+\mu(A^\star) \\
        & = & 1+\alpha_n\mu(A)-\mu(A)^2+\mu(A^\star)(\mu(A)-\alpha_n).
    \end{eqnarray*}
    From here we can deduce that
    \begin{equation*}
        \mu(A^\star)\geq\frac{\mu(A)^2+\mu(A)(1-\alpha_n)-1}{\mu(A)-\alpha_n}
    \end{equation*}
    Note that the denominator of the last expression is positive. On the other hand, by definition, $\alpha_{n+1}$ is the bigger root of the polynomial $X^2+(1-\alpha_n)X-1$, hence the fact that $\mu(A)>\alpha_{n+1}$ implies that the numerator from the last expression is also positive. We may thus conclude that $\mu(A^\star)>0$ and, in particular, $A^\star\neq\varnothing$ so we take an $x\in A^\star$. This means, in particular, that $x\in A$, and moreover
    \begin{eqnarray*}
        \mu(A)+\mu(A-x)-\mu(A\cap(A-x)) & = & \mu(A\cup(A-x)) \\
        & \leq & 1,
    \end{eqnarray*}
    so that (using the fact that $x\in A^\star$)
    \begin{eqnarray*}
        \mu(A\cap(A-x)) & \geq & \mu(A)+\mu(A-x)-1 \\
        & > & \mu(A)+\alpha_n+1-\mu(A)-1 \\
        & = &  \alpha_n.
    \end{eqnarray*}
    Therefore, by induction hypothesis, the set $A\cap(A-x)$ is an $\ip_n$-set. By Lemma~\ref{characterizeipn}, $A$ is an $\ip_{n+1}$-set, and we are done.
\end{enumerate}
\end{proof}
%%**********************************************
%{\color{blue} I think this bound is not optimal since it is coming from an inductive inequality. The recursion for $\alpha_{n}$ comes from an inequality like $\mu(A)>\alpha_{n}$, so $\mu(\{x\colon \mu(A-x)>\alpha_{n-1}\})>0$. The constant is coming from a proof technique, not a combinatorial extermal example.}
%%**********************************************

The sequence from the previous theorem seems interesting in its own right. Note, for example, that $a_1=\frac{-1+\sqrt{5}}{2}$ is the absolute value of the ``negative'' golden ratio (i.e. the negative root of the polynomial $X^2-X-1$).

We do not know if the bounds from Theorem~\ref{sequencebounds} are optimal, in the sense that there couldn't be a theorem if we were to take smaller $\alpha_n$. We do know that for some particular cases of idempotent finitely additive measures (that are not ultrafilter measures), much less is needed, cf. Example~\ref{example-combination-ultrafilters} below.

%%%%%%%%%%%%%%%%%%%%%%%%%%%%%%%%%%%%%%%%%%%%%%
%{\color{blue}This bound $\alpha_{n}$ is not optimal. Define $B=A\cap(A-x)$, so $\mu(B)>\alpha_{n}$. Using idempotence $\mu(A)=\int \mu(A-t)\,d\mu(t)$, then we estimate for every $t$,
%\[
%\mu(A\cap(a-t))\geq\mu(A)+\mu(A-t)-1,
%\]
%so $\mu(B) >2\mu(A)-1>\alpha_{n}$ {\color{red}Aren't you assuming here that $\mu(A-t)=\mu(A)$? But $\mu$ is not translation invariant, merely idempotent.}, why $\mu(B)>\alpha_{n}$;
%we would need $2\mu(A)-1>\alpha_{n}$ I mean $\mu(A)>\frac{\alpha_{n}+1}{2}$, $\Phi(\alpha_{n+1})=\alpha_{n}$, if $\mu(A)>\alpha_{n+1}$, then $\mu(B)\geq\Phi(\mu(A))>\Phi(\alpha_{n+1})=\alpha_{n}$. We know $\alpha_{1}=0,\quad \alpha_{2}=\frac{-1+\sqrt{5}}{2}\approx 0.618$ thus $\alpha_{2}^{\text{true}}<0.618$.\\
%\[
%\mu(A)=\mu\,(\{t |\quad\mu(A-t)\approx \mu(A)\})
%\]
%so for many $t,$
%$\mu(A\cap(A-t))>>2\,\mu(A)-1$ instead of $\mu(B)\geq2\,\mu(A)-1$, we get $\mu(B)\geq\, 2\,\mu(A)-1+\epsilon$ for some $\epsilon>0,$ so $\alpha_{n+1}^{\text{true}}<\alpha_{n+1}$, and this is a contradiction to optimality. Why?\\
%If $\alpha_{n}$ were optimal, then no smaller number could work, but now we have $\alpha_{n}^{\text{true}}$ so $\alpha_{n}$ was not minimal and this directly contradicts the definition of optimality.\\

%Why $\alpha_{n+1}^{\text{true}}<\alpha_{n+1}$?
%I do not know if it is necessary to write!
%}
%%%%%%%%%%%%%%%%%%%%%%%%%%%%%%%%%%%%%%%%%%%%%%

The rest of the section is devoted to examples of idempotent measures that are not ultrafilter measures. We will need to recall some ultrafilter theory for this. The following definitions are standard.

\begin{defn}
Given a semigroup $S$ (equipped with semigroup operation $*$),
\begin{enumerate}
    \item A subset $L\subseteq S$ is called a {\bf left ideal} if $ S*L\subseteq L$,
    \item a left ideal $L\subseteq S$ is {\bf minimal} if, whenever $L'$ is a left ideal such that $L'\subseteq L$, it must be the case that $L=L'$,
    \item given two idempotent elements $u,v\in S$, we say that $u\leq v$ if and only if $u*v=v*u=u$,
    \item an idempotent $u\in S$ is {\bf minimal} if it is minimal with respect to the relation defined above.
\end{enumerate}
\end{defn}

It turns out that the relation $\leq$ is a partial order~\cite[Remark 1.35]{Hindman-Strauss}. Furthermore, for an idempotent $u$, being a minimal idempotent is equivalent to belonging to some minimal left ideal~\cite[Theorem 1.38]{Hindman-Strauss}. Furthermore, if $u$ is a (minimal) idempotent that belongs to the minimal left ideal $L$, then~\cite[Lemma 1.52]{Hindman-Strauss} $L=Lu$ and, furthermore, for each $x\in L$ we have $xu=x$. Another source with a quick but complete introduction to the theory of minimal idempotents and minimal ideals is~\cite{Stevo2010}. The following lemma will be crucial for the construction of our examples.

\begin{lem}\label{lem:absorbing-idempotents}
    There exists a sequence  $(u_n)_{n\in\mathbb N}$ of pairwise distinct nonprincipal idempotent ultrafilters, such that for all $n,m\in I$,
    \begin{equation*}
        u_n+u_m=u_n.
    \end{equation*}
\end{lem}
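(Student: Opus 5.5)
Our plan is to find infinitely many distinct idempotents inside one minimal left ideal of the semigroup $(\beta\mathbb Z\setminus\mathbb Z,+)$, and to use the facts quoted just before the lemma (\cite[Lemma 1.52]{Hindman-Strauss}). Concretely, if $L$ is a minimal left ideal and $v\in L$ is an idempotent, then every $x\in L$ satisfies $x+v=x$. So if $u_n,u_m$ are both idempotents lying in the same minimal left ideal $L$, then $u_n+u_m=u_n$, because $u_n\in L$ and $u_m$ is an idempotent of $L$. This is exactly the required identity, for every pair $n,m$ (the index set $I$ in the statement being $\mathbb N$).

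\textbf{Step 1: a minimal left ideal of nonprincipal ultrafilters.} The set $\mathbb Z^*=\beta\mathbb Z\setminus\mathbb Z$ is a closed subsemigroup, hence a compact right-topological semigroup. By the standard Zorn-type argument in compact right-topological semigroups (\cite[Corollary 2.6]{Hindman-Strauss}), it has a minimal left ideal $L$. Moreover $L$ is closed, since $L=\mathbb Z^*+x$ for any $x\in L$ and right translations are continuous. Hence $L$ is itself a compact right-topological semigroup. By the Ellis--Numakura lemma, for each $x\in L$ the compact semigroup $L+x\subseteq L$ contains an idempotent, so $L$ contains idempotents. All of them are nonprincipal because $L\subseteq\mathbb Z^*$.

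\textbf{Step 2: infinitely many idempotents in $L$.} This is the main obstacle. The structure theorem says $L$ is the disjoint union of the groups $e+L\cap L+e$, where $e$ ranges over the idempotents of $L$. Equivalently, the idempotents of $L$ are in bijection with the minimal right ideals of $\mathbb Z^*$, since each minimal right ideal $R$ meets $L$ in the group $R\cap L$, and that group has exactly one idempotent. So it suffices to know that $\mathbb Z^*$ (equivalently $\beta\mathbb Z$, whose smallest ideal lies in $\mathbb Z^*$) has infinitely many minimal right ideals. This is a known theorem: $\beta\mathbb N$ and $\beta\mathbb Z$ have $2^{\mathfrak c}$ minimal right ideals (\cite[Theorem 6.9 and Corollary 6.41]{Hindman-Strauss}). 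We would cite it rather than reprove it.

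If a self-contained argument is preferred, we would try the following. Take a surjective homomorphism $\mathbb Z\to\mathbb Z_k$, which extends to a continuous surjective homomorphism $\beta\mathbb Z\to\mathbb Z_k$. One then wants to show that the right ideals $e+\beta\mathbb Z$ separate into at least $k$ classes. However, residue maps do not obviously separate right ideals, so citing the theorem is the route we commit to.

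\textbf{Step 3: conclusion.} Pick countably many pairwise distinct idempotents $u_n\in L$. By the remark in the first paragraph, $u_n+u_m=u_n$ for all $n,m$. This gives the desired sequence $(u_n)_{n\in\mathbb N}$ of pairwise distinct nonprincipal idempotent ultrafilters.
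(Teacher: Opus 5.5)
Your proposal is correct and follows essentially the same route as the paper: take a minimal left ideal $L$ of $\mathbb Z^*$, obtain infinitely many idempotents in it by citing a cardinality theorem from Hindman--Strauss, and use that every idempotent of $L$ is a right identity for $L$. Your Step~2 is more accurate than the paper's write-up, which says $I$ contains ``$2^{\mathfrak c}$ distinct minimal left ideals''; what is actually needed is what you identify, namely infinitely many idempotents in a single minimal left ideal, which amounts to infinitely many minimal right ideals.
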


\begin{proof}
    Working in the (right-topological) semigroup $\mathbb Z^*=\beta\mathbb Z\setminus\mathbb Z$, we pick a minimal left ideal $I$. By~\cite[Theorem 6.9]{Hindman-Strauss}, $I$ contains $2^{\mathfrak c}$ distinct minimal left ideals, so we may arbitrarily choose the $u_n$ to be any pairwise distinct family of minimal left ideals of $I$. By the properties of minimal idempotents within minimal left ideals (explained in the paragraph prior to the present theorem), for each of these $u_m\in L$, and for all $x\in L$, we will have $x+u_m=x$; in particular $u_n+u_m=u_n$ for every $n\in\mathbb N$. 
\end{proof}

%***************************************
\begin{cor}\label{convexcombinations}
    There is an infinite sequence of pairwise distinct idempotent ultrafilter measures $(\mu_n)_{n\in\mathbb N}$ such that, for all $n,m\in\mathbb N$, we have
    \begin{equation*}
        \mu_n*\mu_m=\mu_n.
    \end{equation*}
\end{cor}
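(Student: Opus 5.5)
The plan is to transfer Lemma~\ref{lem:absorbing-idempotents} to the setting of $\ba(\mathbb Z)$ through the identification of ultrafilters with ultrafilter measures. That identification was recorded in the theorem of Section~\ref{Preliminaries}, which gives the bijection $u\mapsto\mu_u$.

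First, take the sequence $(u_n)_{n\in\mathbb N}$ of pairwise distinct nonprincipal idempotent ultrafilters given by Lemma~\ref{lem:absorbing-idempotents}, which satisfies $u_n+u_m=u_n$ for all $n,m$. Then define $\mu_n=\mu_{u_n}$. Because $u\mapsto\mu_u$ is a bijection with inverse $\mu\mapsto u_\mu$, the $\mu_n$ are pairwise distinct ultrafilter measures.

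The key step is the compatibility identity $\mu_u*\mu_v=\mu_{u+v}$ for all $u,v\in\beta\mathbb Z$. This is the statement in Section~\ref{Preliminaries} that the left Arens product coincides with the ultrafilter sum on ultrafilter measures. To verify it directly, I would fix $A\subseteq\mathbb Z$ and compute the inner integral:
\[
\int\chi_A(n+m)\,d\mu_v(m)=\mu_v(A-n)=\chi_{\{n\,:\,A-n\in v\}}(n).
\]
Integrating this indicator against $\mu_u$ gives
\[
(\mu_u*\mu_v)(A)=\mu_u\bigl(\{n\in\mathbb Z : A-n\in v\}\bigr).
\]
This equals $1$ exactly when $\{n : A-n\in v\}\in u$, that is, exactly when $A\in u+v$, and it equals $0$ otherwise. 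Hence $(\mu_u*\mu_v)(A)=\mu_{u+v}(A)$ for every $A$. The one point to check is that integrating a $\{0,1\}$-valued function against an ultrafilter measure returns $\mu_u$ of its support, and this is immediate from finite additivity.

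With the identity in hand, $\mu_n*\mu_m=\mu_{u_n+u_m}=\mu_{u_n}=\mu_n$. Taking $m=n$ shows that each $\mu_n$ is idempotent. I expect no real obstacle: the only substantive content is the compatibility identity, and the paper has already asserted it.
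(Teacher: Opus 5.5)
Your proposal is correct and follows the same route as the paper, which gives no written proof and treats the corollary as immediate from Lemma~\ref{lem:absorbing-idempotents} via the correspondence $u\mapsto\mu_u$ and the fact that the left Arens product restricts to ultrafilter addition. Your direct check that $\mu_u*\mu_v=\mu_{u+v}$, by computing the inner integral as the indicator of $\{n : A-n\in v\}$, supplies exactly the detail the paper asserts in the Preliminaries without proof.
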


\hfill$\Box$

 \begin{ex}\label{example-combination-ultrafilters}
     A (positive) finitely additive measure $\mu$ satisfying:
%     \begin{equation*}
%         \mu_n*\mu_m=\mu_m,
%     \end{equation*}
%     and a sequence of (nonzero) numbers $p_n\in[0,1]$ such that $\sum_{n=1}^{\infty}p_{n}=1$. Define the finitely additive probability measure
%     \[
%     \mu=\sum_{n=1}^{\infty}\,p_{n}\mu_{n},\quad,
%     \]
%Then, 
\begin{enumerate}
    \item $\mu$ is {\bf not} an ultrafilter measure,
    \item $\mu$ is an idempotent measure,
    \item for every $A\subseteq\mathbb Z$, if $\mu(A)>0$ then $A$ is an $\ip$-set.
\end{enumerate} 
\end{ex}

\begin{proof}
It suffices to take $\mu=\frac{1}{2}\mu_1+\frac{1}{2}\mu_2$, where $\mu_1,\mu_2$ are taken from the sequence guaranteed by Corollary~\ref{convexcombinations}.
    \begin{enumerate}
        \item Since $\mu_1,\mu_2$ are two distinct ultrafilter measures, there exists a set $A\subseteq\mathbb Z$ such that
\[
\mu_{1}(A)=1, \quad \mu_2(A)=0,
\]
which implies that $\mu(A)=\frac{1}{2}\notin\{0,1\}$. Therefore $\mu$ is not $\{0,1\}$-valued ---it is not an ultrafilter measure.

\item To see that $\mu$ is an idempotent measure, compute:
% \begin{eqnarray*}
%     (\mu*\mu)&=&\big( \sum_{n}\,p_{n}\,\mu_{n}\big)*\big( \sum_{m}\,p_{m}\,\mu_{m}\big)\\
  %   &=& \sum_{n,m}\,p_{n}\,p_{m}(  \mu_{n}*\mu_{m}) 
%\end{eqnarray*}
%whenever $\mu_{n}*\mu_{m}$ are idempotent, since idempotents absorb each other on the left,
%\[
%\mu_{n}*\mu_{m}=\mu_{n}
%\]
\begin{eqnarray*}
    (\mu*\mu)& = & \left(\frac{1}{2}\mu_1+\frac{1}{2}\mu_2\right)*\left(\frac{1}{2}\mu_1+\frac{1}{2}\mu_2\right)\\
    &=& \frac{1}{4}\mu_1*\mu_1+\frac{1}{4}\mu_1*\mu_2+\frac{1}{4}\mu_2*\mu_1+\frac{1}{4}\mu_2*\mu_2\\
&=& \frac{1}{4}\mu_1+\frac{1}{4}\mu_1+\frac{1}{4}\mu_2+\frac{1}{4}\mu_2\\
& = & \mu,
\end{eqnarray*}
hence $ \mu $ is idempotent.\\

\item Suppose $0<\mu(A)=\frac{1}{2}\mu_{1}+\frac{1}{2}\mu_2$, then there exists at least one $i\in\{1,2\}$ such that $\mu_i(A)>0$. Since $\mu_i$ is an idempotent ultrafilter measure, we conclude that $A$ is an $\ip$-set by Corollary~\ref{prop:idempotentip}.
%\sum_{n=1}^\infty p_n\mu_n(A)$. Then we must have $\mu_i(A)>0$ for some $i$; since the $\mu_i$ are idempotent measures, we conclude that $A$ is an $\ip$-set by Corollary~\ref{idempotent-ultrafilter}.
    \end{enumerate}
\end{proof}

\begin{rem}
    Although the previous example shows an idempotent measure that is a linear combination of idempotent ultrafilters, not every linear (or even convex) combination of idempotent ultrafilters results in an idempotent finitely additive measure. For example, if we were to take two distinct ultrafilters such that $u\leq v$, and we let $\mu_u,\mu_v$ be the corresponding ultrafilter measures, then the measure $\mu=\frac{1}{2}\mu_u+\frac{1}{2}\mu_v$ satisfies:
    \begin{equation*}
        \mu*\mu=\frac{3}{4}\mu_u+\frac{1}{4}\mu_v\neq\mu
    \end{equation*}
    and so $\mu$ is not idempotent.
\end{rem}

In particular, thanks to the previous example we know there exist idempotent finitely additive measures that are not ultrafilter measures. This example is still, however, a linear combination of ultrafilter measures. So the next example will provides us with idempotent finitely additive measures that do not belong to the linear span of ultrafilter measures.

%%%%%%%%%%%%%%%%%%%%%%%%%%%%%%%%%%%%%%%%%%%%%%%%%%%

\begin{ex}\label{infinite-linear-comb}
     A (positive) finitely additive measure $\mu$ satisfying:
\begin{enumerate}
    \item $\mu$ does {\bf not} belong to the linear span of ultrafilter measures,
    \item $\mu$ is an idempotent measure,
    \item for every $A\subseteq\mathbb Z$, if $\mu(A)>0$ then $A$ is an $\ip$-set.
\end{enumerate}
\end{ex}

\begin{proof}
Take a sequence of idempotent ultrafilter measures $(\mu_n)_{n\in\mathbb N}$, satisfying $\mu_n*\mu_m=\mu_n$ for all $n,m$, by Corollary~\ref{convexcombinations}. Now define
\begin{equation*}
\mu=\sum_{n=1}^{\infty}\frac{1}{2^{n}}\mu_{n}   
\end{equation*}
Then $\mu$ is as required. To see this, we first check that it is an idempotent measure:
\begin{eqnarray*}
    \mu*\mu & = & \left(\sum_{n=1}^{\infty}\frac{1}{2^{n}}\mu_{n}\right)*\left(\sum_{m=1}^{\infty}\frac{1}{2^{m}}\mu_{m}\right) \\
    & = & \sum_{n=1}^{\infty}\sum_{m=1}^{\infty}\frac{1}{2^{n+m}}\mu_{n}*\mu_{m} \\
    & = & \sum_{n=1}^{\infty}\sum_{m=1}^{\infty}\frac{1}{2^{n+m}}\mu_{n} \\
    & = & \sum_{n=1}^{\infty}\left(\sum_{m=1}^{\infty}\frac{1}{2^{n+m}}\right)\mu_{n} \\
    & = & \sum_{n=1}^{\infty}\frac{1}{2^{n}}\mu_{n} \\
    & = & \mu.
\end{eqnarray*}
Furthermore, the third property is easy to check, for if $\mu(A)>0$ then there is an $n$ such that $\mu_n(A)>0$; since $\mu_n$ is an idempotent ultrafilter measure, $A$ must be an $\ip$-set. Finally, it remains to prove that $\mu$ cannot be written as a (finite) linear combination of ultrafilter measures, so suppose, to the contrary, that $\mu=\sum_{i=1}^k a_i \nu_i$, where each $\nu_i$ is an ultrafilter measure. From this assumption, a simple calculation shows that, for all $A\subseteq\mathbb Z$, if $\mu(A)>0$ then $\mu(A)>\min\{a_i\big|1\leq i\leq k\}$. Let $N$ be large enough that $\frac{1}{2^{N-1}}<\min\{a_i\big|1\leq i\leq k\}$, suppose that $u_i$ is the ultrafilter corresponding to the measure $\mu_i$ for $i\leq N$, and pick a set $A$ such that $A\in u_N$ but $A\notin u_i$ for $i<N$ (this can be done by means of a relatively straightforward induction, or one can use e.g.~\cite[Lemma 3.4]{fernandez-navarro-soria}, which guarantees even more). This means that $\mu_i(A)=0$ for $i<N$, but $\mu_N(A)=1$. Hence, 
\begin{equation*}
    \mu(A)=\sum_{n=1}^\infty \frac{1}{2^n}\mu_n(A)=\frac{1}{2^N}+\sum_{n=N+1}^\infty\frac{1}{2^n}\mu_n(A),
\end{equation*}
so that $0<\frac{1}{2^N}\leq\mu(A)\leq\frac{1}{2^{N-1}}$, a contradiction.
\end{proof}

\section{Translation-invariant and almost translation-invariant measures}
\label{Translationinvariantmeasures}

In this section we introduce two kinds of measures that provide stronger combinatorial properties for subsets of $\mathbb Z$.

\begin{defn}
    Let $\mu\in\ba(\mathbb Z)$.
    \begin{enumerate}
        \item We say that $\mu$ is {\bf translation-invariant} if, for every $A\subseteq\mathbb Z$ and for every $x\in\mathbb Z$, $\mu(A-x)=\mu(A)$.
        \item We say that $\mu$ is {\bf almost translation-invariant} if, for every $A\subseteq\mathbb Z$, we have
    \begin{equation*}
        \mu\left(\left\{x\in\mathbb Z\big|\mu(A-x)=\mu(A)\right\}\right)=1.
    \end{equation*}
    \end{enumerate}
\end{defn}

Translation-invariant finitely additive probability measures,also known as invariant means, have been extensively studied on topological groups and are closely related to amenability~\cite{F.Greenleaf}. In the particular setting of the natural numbers, translation-invariant finitely additive probability measures and their basic properties have been studied~\cite{Gardner}. It is not extremely hard to obtain these kinds of measures by means of the Hahn--Banach theorem. The notion of almost translation-invariant measures seems to be a natural generalization, that happens to be related to idempotence: an ultrafilter measure is almost translation-invariant if and only if it is idempotent~\cite{hindman-ultrafiltersch}.
%***********************************

Clearly, every translation-invariant measure is almost translation-invariant. It is worth noting that translation-invariant measures are never ultrafilter measures, for if we partition $\mathbb Z=A_0\cup A_1$, say, letting $A_0$ be the set of even numbers and $A_1$ the set of odd numbers, then if $\mu$ is translation invariant we have $\mu(A_1)=\mu(1+A_0)=\mu(A_0)$, so $2\mu(A_0)=\mu(A_0)+\mu(A_1)=\mu(A_0\cup A_1)=\mu(\mathbb Z)$. Therefore, if $\mu$ is nonzero, then necessarily $\mu(A_0)\neq 0$ and $\mu(\mathbb Z)=2\mu(A_0)$, showing that $\mu$ cannot be $\{0,1\}$-valued (this argument was probably first explicitly pointed out by Hindman~\cite{hindman-ultrafiltersch}).

In particular, idempotent measures are not necessarily translation invariant (since any idempotent ultrafilter measure cannot be translation-invariant). Conversely, a translation-invariant finitely additive measure that is not an ultrafilter measure is not idempotent. Since, if $\mu$ is such a measure, and $A\subseteq\mathbb Z$ satisfies $0<\mu(A)<1$, then, if $\mu$ were idempotent we would have:
    \begin{eqnarray*}
        \mu(A) & = & \int \mu(A-x)\,d\mu(x)\\
        &=&\int \mu(A)\,d\mu(x)\\
        &=& \mu(A)\int d\mu(x)\\
        &=& \mu(A)^{2},
    \end{eqnarray*}
    which can only happen if $\mu(A)\in\{0,1\}$, a contradiction.

Now, it is well-known that if $\mu$ is an ultrafilter measure then $\mu$ is almost translation-invariant if and only if it is idempotent. So almost translation-invariant measures appear to be a natural generalization of both translation-invariant and idempotent measures. In the rest of the section, we explore the combinatoral implications of almost translation-invariant measures first, and of translation-invariant measures after. We begin by noting that almost translation-invariant measures yield much better lower bounds than the ones obtained from idempotent measures in Theorem~\ref{sequencebounds}.

\begin{thm}\label{almosttranslationinvariant}
Let $\mu$ be a positive, finitely additive measure, belonging to the unit ball of $\ba(\mathbb Z)$, that is almost translation-invariant. Then, for every $A\subseteq\mathbb Z$, if $\mu(A)>1-\frac{1}{2^{n-1}}$ then $A$ is an $\ip_{n}$-set.
\end{thm}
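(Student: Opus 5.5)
We argue by induction on $n$, following the scheme of the proof of Theorem~\ref{sequencebounds} but replacing the averaging argument by almost translation-invariance. For $n=1$ the threshold is $1-\frac{1}{2^0}=0$. So $\mu(A)>0$ forces $A\neq\varnothing$, that is, $A$ is an $\ip_1$-set. Now assume the statement holds for $n$, and let $\mu(A)>1-\frac{1}{2^{n}}$. The goal is to find some $x\in A$ with
\begin{equation*}
\mu(A\cap(A-x))>1-\frac{1}{2^{n-1}};
\end{equation*}
the induction hypothesis then makes $A\cap(A-x)$ an $\ip_n$-set, and Proposition~\ref{characterizeipn} finishes the step.

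To find such an $x$, set $G=\{x\in\mathbb Z\mid \mu(A-x)=\mu(A)\}$. By almost translation-invariance $\mu(G)=1$, and hence $\mu(\mathbb Z\setminus G)\le 0$. Since $\mu$ is positive and in the unit ball, we have $\mu(\mathbb Z)\le1$. Then
\begin{equation*}
\mu(A\cap G)\ge \mu(A)+\mu(G)-\mu(A\cup G)\ge\mu(A)+1-1=\mu(A)>0,
\end{equation*}
so $A\cap G\neq\varnothing$. Pick any $x\in A\cap G$.

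For this $x$, inclusion–exclusion together with $\mu(A\cup(A-x))\le\mu(\mathbb Z)\le 1$ gives
\begin{equation*}
\mu(A\cap(A-x))\ge\mu(A)+\mu(A-x)-1=2\mu(A)-1>2\Bigl(1-\tfrac{1}{2^{n}}\Bigr)-1=1-\tfrac{1}{2^{n-1}},
\end{equation*}
which is the bound we needed.

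There is no serious obstacle here. The only delicate point is justifying that $G$ meets $A$. This needs $\mu(\mathbb Z)\le1$, which follows from $\mu$ being positive and of norm at most $1$, combined with $\mu(G)=1$, so that $G$ is full and $\mu(A\cap G)=\mu(A)>0$. Note that the hypothesis therefore forces $\mu(\mathbb Z)=1$. No use of idempotence or of any integral formula is required.
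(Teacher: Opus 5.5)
Your proof is correct and follows the paper's argument essentially verbatim: the same induction, the same full-measure set $\{x\mid\mu(A-x)=\mu(A)\}$, a point $x$ in its intersection with $A$, the inclusion--exclusion bound $\mu(A\cap(A-x))\ge 2\mu(A)-1>1-\frac{1}{2^{n-1}}$, and then Proposition~\ref{characterizeipn}. Your check that $\mu(A\cap G)\ge\mu(A)>0$, using $\mu(\mathbb Z)\le 1$, usefully fills in the step that the paper passes over with ``in particular''.
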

\begin{proof}
We proceed by induction on $n$, the case $n=1$ being trivial. Suppose the result is true for $n$, and let $A\subseteq\mathbb Z$ be such that $\mu(A)>1-\frac{1}{2^n}$. Letting $A^*=\{x\in\mathbb Z\big|\mu(A-x)=\mu(A)\}$, we know by almost translation-invariance that $\mu(A^*)=1$. In particular, $A\cap A^*\neq\varnothing$, so let $x\in A\cap A^*$. Since $\mu(A-x)=\mu(A)$, we obtain
\begin{eqnarray*}
    1\geq \mu(A\cup(A-x)) & = & \mu(A)+\mu(A-x)-\mu(A\cap(A-x)) \\
    & = & 2\mu(A)-\mu(A\cap(A-x)) \\
    & > & 2-\frac{1}{2^{n-1}}-\mu(A\cap(A-x)).
\end{eqnarray*}
We may thus deduce that $\mu(A\cap(A-x))>1-\frac{1}{2^{n-1}}$; by induction hypothesis, $A\cap(A-x)$ is an $\ip_n$-set, and so $A$ is an $\ip_{n+1}$-set by Lemma~\ref{characterizeipn}.
\end{proof}

We now introduce a stronger combinatorial definition that will be related to translation-invariant measures; this definition is inspired in
%\section{Translation-invariant measures}
%\label{Translationinvariantmeasures}
%Inspired by
the characterization of IP-sets given by Proposition~\ref{characterizeipn}.%, we introduce the following definition.

\begin{defn}
    Given an $n\in\mathbb N$, we say that a set $A\subseteq\mathbb Z$ is {\bf strongly $\ip_n$}, abbreviated $\sip_n$, in the following recursive way:
    \begin{enumerate}
        \item $A$ is $\sip_1$ if it is nonempty,
        \item $A$ is $\sip_{n+1}$ if, for every $a\in A$, the set $A\cap(A-n)$ is $\sip_n$.
    \end{enumerate}
\end{defn}

\begin{rem}
    Proposition~\ref{characterizeipn} shows that every $\sip_n$ set must also be an $\ip_n$ set. However, for $n\geq 2$, the two notions are no longer equivalent. For example, the (finite) set $\{1,2,3\}$ is an $\ip_2$ set (it contains $\fs(1,2)$), but it is not a $\sip_2$ set. This is because every $\sip_2$ set must be infinite: for, if $A$ is $\sip_2$, then one can recursively build an infinite, injective sequence $(a_n)_{n\in\mathbb N}$ of elements of $A$, simply by letting $a_0\in A$ be arbitrary and, knowing $a_n\in A$, there must be $x_n\in A\cap(A-a_n)$ (since the latter set must be $\sip_1$ and, hence, nonempty) so we may define $a_n<a_n+x_n=a_{n+1}\in A$.
\end{rem}

The previous remark shows, indeed, that it is possible to have, for every $n$, $\ip_n$ sets that are not even $\sip_2$ (since there are $\ip_n$ sets that are finite). Despite this, the following example shows that there is no implication relation between $\sip_2$ and $\ip_n$ when $n\geq 3$.

\begin{ex}\label{ex:sipnotip}
    There exists an $\sip_2$ set that is not an $\ip_3$ set.
\end{ex}

\begin{proof}
    Choose an increasing sequence of natural numbers $(x_n)_{n\in\mathbb N}$ such that, for all $n\in\mathbb N$, $4x_n<x_{n+1}$. Under this condition, a standard inductive argument shows that $x_{n+1}>2\sum_{i=1}^n x_i$, and this implies that the sequence $(x_n)_{n\in\mathbb N}$ has {\it $2$-uniqueness of finite sums}, as defined in~\cite[Def. 2.5]{strongly-union-trivialsums}; in other words, every element of the form $\varepsilon_1 x_{i_1}+\cdots+\varepsilon_k x_{i_k}$ with $i_1<\cdots<i_k$ and each $\varepsilon_j\in\{1,2\}$ can be represented uniquely as such (in particular, every $a\in\fs(x_n)_{n\in\mathbb N}$ can be represented in a unique (up to the order of the summands) way as a finite sum of elements of the sequence $(x_n)_{n\in\mathbb N}$). Now define
    \begin{equation*}
        A=\left\{a_n+a_{n+1}+\cdots+a_{n+k}\big|n\in\mathbb N\text{ and }k\geq 0\right\},
    \end{equation*}
    in other words, $A$ is the set of {\it adjacent finite sums} from the sequence $(x_n)_{n\in\mathbb N}$, as defined in~\cite{carlucci-adjacent}. %Similarly we define the sets $A_m=\left\{a_n+a_{n+1}+\cdots+a_{n+k}\big|m\leq n\text{ and }k\geq 0\right\}$; in other words, $A_m$ is the set of adjacent finite sums of the truncated sequence $(x_n)_{n=m}^\infty$.
    Then $A$ is an $\sip_2$ set: given any $a\in A$, we write $a=x_n+x_{n+1}+\cdots+x_{n+k}$ and note that
    \begin{equation*}
        \left\{a_{n+k+1}+\cdots+a_{n+k+m}\big|m\in\mathbb N\right\}\subseteq A\cap(A-a),
    \end{equation*}
    and in particular the latter set is nonempty, hence $\sip_1$.

    On the other hand we claim that $A$ is not an $\ip_3$-set. To see this, take any $a=x_n+x_{n+1}+\cdots+x_{n+k}\in A$, and let us analyze what the elements of $A\cap(A-a)$ look like: if $b\in A\cap(A-a)$, then since $b\in a$ we can write $b=x_m+x_{m+1}+\cdots+x_{m+t}$ for some $m,t$; also, since $b\in A-a$ we must have $a+b\in A$ and so $a+b=x_\ell+x_{\ell+1}+\cdots+x_{\ell+s}$ for some $\ell,s$. Therefore
    \begin{equation*}        x_\ell+x_{\ell+1}+\cdots+x_{\ell+s}=a+b=x_n+x_{n+1}+\cdots+x_{n+k}+x_m+x_{m+1}+\cdots+x_{m+t};
    \end{equation*}
    from here, the $2$-uniqueness of finite sums for the sequence $(x_n)_{n\in\mathbb N}$ implies that either $n=\ell$, $m=n+x+1$, and $m+t=\ell+s$, or $m=\ell$, $n=m+t+1$ and $n+x=\ell+s$. Since this holds for every $b\in A\cap(A-a)$, we see that
    \begin{eqnarray*}
        A\cap(A-a) & = & \left\{x_{n-s}+x_{n-s+1}+\cdots+x_{n-1}\big|s<n\right\}\cup \\
        & & \cup\left\{x_{n+k+1}+x_{n+k+2}+\cdots+x_{n+k+s}\big|s\in\mathbb N\right\}.
    \end{eqnarray*}
    Given this description of $A\cap(A-a)$, it becomes easy to prove, using again the $2$-uniqueness of finite sums, that whenever $x,y\in A\cap(A-a)$ we must have $x+y\notin A\cap(A-a)$; hence the latter set is not an $\ip_2$-set. Since $a\in A$ was arbitrary, we conclude that $A$ is not an $\ip_3$-set.
\end{proof}

%***************************************************************
\begin{thm}\label{translationinvariant}
Let $\mu$ be a translation-invariant, positive finitely additive measure belonging to the unit ball of $\ba(\mathbb Z)$. Then, for every $A\subseteq\mathbb Z$, if $\mu(A)>1-\frac{1}{2^{n-1}}$ then $A$ is an $\sip_{n}$-set.
\end{thm}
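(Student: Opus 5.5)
We argue by induction on $n$, mirroring the proof of Theorem~\ref{almosttranslationinvariant}, but using full translation-invariance in place of almost translation-invariance. The point is that the sets we shift by are now arbitrary elements of $A$, not just those in a full-measure set. We read the definition of $\sip_{n+1}$ as intended: for every $a\in A$, the set $A\cap(A-a)$ is $\sip_n$.

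For the base case $n=1$, the hypothesis is $\mu(A)>0$. This forces $A\neq\varnothing$, since $\mu(\varnothing)=0$, so $A$ is $\sip_1$.

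For the inductive step, assume the statement for $n$ and let $\mu(A)>1-\frac{1}{2^{n}}$. We must show that for every $a\in A$ the set $A\cap(A-a)$ is $\sip_n$. Fix an arbitrary $a\in A$. Translation-invariance gives $\mu(A-a)=\mu(A)$. Finite additivity, positivity and $\mu(\mathbb Z)\leq 1$ (unit ball) then give
\begin{equation*}
1\geq\mu(A\cup(A-a))=2\mu(A)-\mu(A\cap(A-a)).
\end{equation*}
Hence
\begin{equation*}
\mu(A\cap(A-a))\geq 2\mu(A)-1>2-\frac{2}{2^{n}}-1=1-\frac{1}{2^{n-1}}.
\end{equation*}
By the induction hypothesis, applied to the set $A\cap(A-a)$, this set is $\sip_n$. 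Since $a\in A$ was arbitrary, $A$ is $\sip_{n+1}$.

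There is essentially no obstacle here. The only point needing care is that the induction hypothesis must be invoked for every $a\in A$ simultaneously. This is legitimate because translation-invariance holds for each individual $x\in\mathbb Z$, whereas in the almost translation-invariant case we could only find one suitable $x$. We must also make sure the inequality $\mu(A\cup(A-a))\leq 1$ really uses that $\mu$ is positive with $\mu(\mathbb Z)\leq 1$.
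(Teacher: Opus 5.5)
Your proof is correct and is the same as the paper's. You fix an arbitrary $a\in A$, use translation-invariance to get $\mu(A-a)=\mu(A)$, apply inclusion--exclusion with $\mu(\mathbb Z)\le 1$ to obtain $\mu(A\cap(A-a))>1-\frac{1}{2^{n-1}}$, and then invoke the induction hypothesis. Your final step concludes $\sip_{n+1}$ directly from the (corrected) definition because $a\in A$ was arbitrary; that is the right justification, whereas the paper cites Proposition~\ref{characterizeipn} at that point.
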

\begin{proof}
We proceed by induction on $n$. The case $n=1$ is trivial since it obviously $\mu(A)>0$ implies $A\neq\varnothing$, that is, $A$ is $\sip_1$. Now suppose the result is true for $n$, and let $A\subseteq\mathbb Z$ be such that $\mu(A)>1-\frac{1}{2^n}$. Pick any $x\in A$ and consider the set $A-x$. Translation-invariance of $\mu$ implies that $\mu(A-x)=\mu(A)$, from where we can obtain
\begin{eqnarray*}
    1\geq \mu(A\cup(A-x)) & = & \mu(A)+\mu(A-x)-\mu(A\cap(A-x)) \\
    & = & 2\mu(A)-\mu(A\cap(A-x)) \\
    & > & 2-\frac{1}{2^{n-1}}-\mu(A\cap(A-x)).
\end{eqnarray*}
We may thus deduce that $\mu(A\cap(A-x))>1-\frac{1}{2^{n-1}}$; by induction hypothesis, this implies that $A\cap(A-x)$ is an $\sip_n$-set, and so $A$ must be an $\sip_{n+1}$-set by Lemma~\ref{characterizeipn}.
\end{proof}

\section{Discussion}

It is natural to wonder whether the bounds from Theorems~\ref{sequencebounds},~\ref{almosttranslationinvariant}, and~\ref{translationinvariant} are optimal. The examples obtained in this paper do not show this, as all of the measures $\mu$ described here satisfy that, if $\mu(A)>0$, then $A$ is an $\ip$-set. Of course, answering these optimality questions would involve explicitly constructing some carefully fine-tuned finitely additive measures.

\begin{question}
    \begin{enumerate}
        \item Is there a finitely additive positive idempotent measure $\mu\in\ba(\mathbb Z)$, and a set $A\subseteq\mathbb Z$ that is not an $\ip_n$-set, such that $\mu(A)=\alpha_n$? (Where $\alpha_n$ is as defined in Theorem~\ref{sequencebounds}.)
        \item Is there an almost translation-invariant measure $\mu\in\ba(\mathbb Z)$, and a set $A\subseteq\mathbb Z$ that is not an $\ip_n$-set, such that $\mu(A)=1-\frac{1}{2^{n-1}}$?
        \item Is there a translation-invariant measure $\mu\in\ba(\mathbb Z)$, and a set $A\subseteq\mathbb Z$ that is not a $\sip_n$-set, such that $\mu(A)=1-\frac{1}{2^{n-1}}$?
    \end{enumerate}
\end{question}

Finally, we have studied three kinds of finitely additive measures: idempotent, translation-invariant and almost translation-invariant. We know that the first two kinds bear no relationship between them (in the sense that being of one kind does not imply being of the other kind). There is, however, not much known about almost translation-invariant measures, other than the fact that these are equivalent to idempotent in the ultrafilter setting, but not necessarily in general.

\begin{question}
    \begin{enumerate}
        \item Is there an almost translation-invariant measure that is neither an ultrafilter measure, nor translation-invariant?
        \item Is there an almost translation-invariant measure that is not idempotent?
        \item Is there an idempotent finitely additive measure that is not almost translation-invariant?
    \end{enumerate}
\end{question}

In the previous three questions, the first requires the measure not to be an ultrafilter measure in order for the question not to be trivial (since every idempotent ultrafilter measure is almost translation-invariant but not translation-invariant). The other two questions, on the other hand, do not need this explicitly stated, but any examples of a measure answering positively any of the two questions cannot possibly be an ultrafilter measure.

\section*{Acknowledgements}

The second author is grateful to Juris Stepr\={a}ns for introducing him (over 10 years ago!) to this research topic. The first author was supported by a Secihti Postdoctoral Fellowship ({\it Estancias Posdoctorales por México}), under the mentorship of the second author. The second author was partially supported by IPN's internal grant SIP-20260817 as well as by Secihti's grant CBF2023-2024-334.

%%%%%%%%%%%%%%%%%%%%   End of main body of article
%
%                             References
%
%   BiBTeX users uncomment the following line:
%
%\bibliographystyle{jloganal}
%

\end{document}